\newtheorem{thm}{Theorem}[section]
\newtheorem{prop}[thm]{Proposition}
\newtheorem{cor}[thm]{Corollary}
\theoremstyle{definition}
\newtheorem{rmk}[thm]{Remark}
\DeclareMathOperator{\R}{\mathbf R}
\DeclareMathOperator{\PP}{\mathbf P}
\DeclareMathOperator{\Int}{Int}
\DeclareMathOperator{\Sing}{Sing}
\DeclareMathOperator{\eps}{\epsilon}
\title{Virtual Poincar\'e polynomial of the link of a real algebraic variety}
\author{Goulwen Fichou and Masahiro Shiota}
\thanks{The first author has been supported by the ANR project ANR-08-JCJC-0118-01.}
\address{IRMAR (UMR 6625), Universit\'e de Rennes 1, Campus de
  Beaulieu, 35042 Rennes Cedex, France}
\address{Graduate School of Mathematics, Nagoya University, Chikusa, Nagoya, 
464-8602, Japan}
\date\today
\subjclass[2010]{14P05,14P20}
\keywords{real algebraic variety, link, virtual Poincar\'e polynomial}
\begin{document}

\begin{abstract} The Euler characteristic of the link of a real algebraic variety is an interesting
  topological invariant in order to discuss local topological
  properties. We prove in the paper that an invariant stronger than
  the Euler Characteristic is well defined for the link of an
  algebraic variety: its virtual Poincar\'e polynomial.
\end{abstract}
\maketitle

\section*{Introduction}
Let $X\subset \R^n$ be a real algebraic variety and $a$ a point in
$X$. Let $S(a,\eps)$ denote the sphere with centre $a$ and radius
$\eps >0$ in $\R^n$. The local conic structure theorem implies that, for
$\eps$ small enough, the topological type of $X\cap S(a,\eps)$ is
independent of $\eps$; this intersection is called the link of $X$ at
$a$, denoted by $lk(X,a)$. The link is known to be a very interesting
topological invariant, as illustrated by Sullivan Theorem \cite{Su}
that asserts that the Euler characteristic of the link at any point of
a real algebraic subvariety of $\R^n$ is even. This evenness even
leads to a topological characterisation of affine real algebraic varieties in
dimension less than two. In higher dimension only necessary conditions
are known (see \cite{AK,MCPlink}).

Actually, the link is not only a topological invariant but also a
semialgebraic topological invariant: for $\eps,\eps'$ small enough one
may find a semialgebraic homeomorphism between $X\cap S(a,\eps)$ and
$X\cap S(a,\eps')$ (see \cite{CK} for a more precise statement). But can
we find a stronger invariance than the semialgebraic topological one for the link of a real algebraic
subvariety? The link is again a real algebraic variety, so one may
wonder whether the link is an algebraic or Nash invariant
(Nash means here analytic and semialgebraic).

In this paper, we focus on a finer invariant for real algebraic
varieties than the Euler characteristic: the virtual Poincar\'e
polynomial. The virtual Poincar\'e polynomial, denoted by $\beta$, has been introduced in
\cite{MCP} as an algebraic invariant for Zariski constructible real algebraic sets. Then the invariance of the
virtual Poincar\'e polynomial has been established under Nash
diffeomorphisms \cite{Fi}, and finally in \cite{MCP2} under regular
homeomorphisms. We prove in Corollary \ref{link} that the virtual Poincar\'e
polynomial of the link of an affine real algebraic variety is well
defined. More generally, we state in Theorem \ref{main}) that the
virtual Poincar\'e polynomial of the fibres of a regular map between
real algebraic varieties is
constant along the connected components of an algebraic stratification of
the target variety.

The proof of Theorem \ref{main} is different from the proof of the
fact that the Euler characteristic of the link is well defined. If one
knows that the link is invariant under semialgebraic homeomorphisms,
we are not able to prove that this is still the case under the stronger
class of regular homeomorphisms. To reach our aim, we pass through
resolution of singularities \cite{H} and a crucial property of the
virtual Poincar\'e polynomial. Similarly to the Euler characteristic (with compact
supports), the virtual Poincar\'e polynomial is an additive
invariant: $\beta(X)=\beta(Y)+\beta(X\setminus Y)$ for $Y\subset X$ a
closed subvariety of $X$. This enables to express the virtual
Poincar\'e polynomial of the link at a point of the algebraic variety
$X$ as a sum of terms coming from a
given resolution of the singularities of $X$. Then
using Nash triviality results, in the smooth case \cite{CS} together
with the normal crossing case \cite{FKS} and applying the invariance of
the virtual Poincar\'e polynomial under Nash diffeomorphisms to selected
varieties coming from the resolution process enable to reach our goal.

\vskip 5mm

The paper is organised as follows. In the first section we recall the
basic properties of the virtual Poincar\'e polynomial as we need
it. We adapt also the Nash triviality results from \cite{CS} and
\cite{FKS} to our particular setting. In the second section we state
the invariance of the virtual Poincar\'e polynomial of the fibres of a
regular map between real algebraic varieties, and deduce from it
Corollary \ref{link}.
 

\section{Preliminaries}\label{}
\subsection{Virtual Poincar\'e polynomial}
For real algebraic varieties, the best additive and multiplicative
invariant known is the virtual Poincar\'e polynomial
\cite{MCP}. It assigns to a Zariski constructible real algebraic set a polynomial
with integer coefficients in such a way that the coefficients coincide
with the Betti numbers with $\mathbb Z_2$-coefficients for proper non singular
real algebraic varieties.

\begin{prop}(\cite{MCP})\label{beta} Take $i\in \mathbb N$. The Betti
 number $\beta_i(\cdot)=\dim H_i(\cdot, \frac {\mathbb Z}{2 \mathbb
   Z})$, considered on compact non singular real algebraic varieties,
 admits an unique extension as an additive map $\beta_i$ to the
 category of Zariski constructible real algebraic sets, with values in $\mathbb Z$. Namely
$$\beta_i(X)=\beta_i(Y)+\beta_i(X \setminus Y)$$
for $Y \subset X$ a closed subvariety of $X$. 

Moreover the polynomial
$\beta(\cdot)=\sum_{i \geq 0} \beta_i(\cdot)u^i \in \mathbb Z [u]$ is
multiplicative: 
$$\beta(X\times Y)=\beta(X)\beta(Y)$$ 
for Zariski constructible real algebraic sets.
\end{prop}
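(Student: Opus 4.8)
The plan is to realise $\beta$ as a homomorphism out of the Grothendieck group of real algebraic varieties and to pin it down on a small set of generators, letting resolution of singularities \cite{H} do all the reduction work. Write $K_0(\mathcal V_\R)$ for the free abelian group on isomorphism classes of Zariski constructible real algebraic sets modulo the relations $[X]=[Y]+[X\setminus Y]$ for $Y\subset X$ a closed subvariety, with ring structure $[X]\cdot[Y]=[X\times Y]$. An additive map $\beta_i$ as in the statement is precisely a group homomorphism $K_0(\mathcal V_\R)\to\Z$, and $\beta=\sum_{i\ge 0}\beta_iu^i$ is a ring homomorphism $K_0(\mathcal V_\R)\to\Z[u]$ exactly when it is multiplicative. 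So the whole content is to construct such a homomorphism extending the mod $2$ Betti numbers of compact nonsingular varieties, to see that it is unique, and to check multiplicativity.

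\emph{Uniqueness.} This follows by Noetherian induction on dimension. Any Zariski constructible $X$ is a finite disjoint union of nonsingular irreducible Zariski locally closed subvarieties (peel off singular loci and pass to irreducible components, decreasing dimension at each step), so by additivity it suffices to determine $\beta_i$ on such a $V$. Embedding $\R^n\subset\PP^n$ and applying Hironaka's theorem to the closure of $V$, with centre inside the boundary and the singular locus, yields a compact nonsingular $\widetilde V\supset V$ with $V$ dense Zariski open, hence $\dim(\widetilde V\setminus V)<\dim V$. Then additivity forces $\beta_i(V)=\beta_i(\widetilde V)-\beta_i(\widetilde V\setminus V)$, where the first term is the prescribed value $\dim H_i(\widetilde V;\Z/2)$ and the second is known by induction. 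Hence two additive extensions that agree on compact nonsingular varieties agree everywhere.

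\emph{Existence.} Here I would invoke the presentation of $K_0(\mathcal V_\R)$ by generators and relations resulting from Hironaka resolution together with the weak factorization theorem (the real analogue of Bittner's presentation): $K_0(\mathcal V_\R)$ is the quotient of the free abelian group on classes of connected compact nonsingular real algebraic varieties by the subgroup generated by $[\varnothing]$ and by the blow-up relations $[\mathrm{Bl}_Z M]-[E]-[M]+[Z]$, where $M$ is compact nonsingular, $Z\subset M$ is a nonsingular closed subvariety, and $E$ is the exceptional divisor. I then set $\varphi(M)=\sum_{i\ge 0}(\dim H_i(M;\Z/2))u^i$ on the free group and check that $\varphi$ annihilates every blow-up relation; the induced homomorphism $\beta\colon K_0(\mathcal V_\R)\to\Z[u]$ is then the desired additive invariant, agreeing with $\dim H_i(\cdot;\Z/2)$ on compact nonsingular varieties by construction, and $\beta_i$ is the coefficient of $u^i$.

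\emph{The blow-up relation --- the main obstacle --- and multiplicativity.} Let $c=\codim_M Z$. The exceptional divisor is the projectivised normal bundle $E=\PP(N_{Z/M})$, a Zariski locally trivial $\PP^{c-1}_\R$-bundle over $Z$. All cohomology being taken with $\Z/2$ coefficients --- over $\R$ there is no Hodge or weight structure to lean on, which is exactly what makes this step delicate --- the projective bundle formula (Leray--Hirsch applied with the first Stiefel--Whitney class of the tautological line bundle, which restricts to a generator of $H^*(\R\PP^{c-1};\Z/2)$ on each fibre) gives $\beta(E)=(1+u+\cdots+u^{c-1})\beta(Z)$, while the mod $2$ blow-up formula --- from a Mayer--Vietoris comparison of a tubular neighbourhood of $Z$ in $M$ with one of $E$ in $\mathrm{Bl}_Z M$, the complements being identified --- gives $\beta(\mathrm{Bl}_Z M)=\beta(M)+(u+\cdots+u^{c-1})\beta(Z)$. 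Subtracting, $\beta(\mathrm{Bl}_Z M)-\beta(E)=\beta(M)-\beta(Z)$, so the relation dies. Finally, since $\beta$ is a homomorphism out of $K_0(\mathcal V_\R)$ and the classes of compact nonsingular varieties generate the ring, multiplicativity reduces to $\beta(M\times M')=\beta(M)\beta(M')$ for such varieties, which is the Künneth formula over the field $\Z/2$; a routine two-step subgroup argument then propagates $\beta(X\times Y)=\beta(X)\beta(Y)$ to all classes. The genuinely delicate ingredients are thus the mod $2$ blow-up and projective bundle computations, together with the availability over $\R$ of the weak factorization theorem underpinning the presentation of $K_0(\mathcal V_\R)$.
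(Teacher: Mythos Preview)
The paper does not prove this proposition; it is quoted without proof from McCrory--Parusi\'nski \cite{MCP}, so there is no ``paper's own proof'' to compare against. Your proposal is therefore being measured against the original source rather than against anything in the present paper.

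On its merits, your sketch is sound and is in fact close to how the result is established in the literature. The Bittner-style presentation of $K_0(\mathcal V_\R)$ via weak factorization is available over $\R$ (characteristic zero suffices), and your verification of the blow-up relation is correct: with $\Z/2$ coefficients the exceptional divisor is an $\R\PP^{c-1}$-bundle, Leray--Hirsch applies via the first Stiefel--Whitney class, and the real blow-up formula for mod~$2$ Poincar\'e polynomials gives exactly $\beta(\mathrm{Bl}_Z M)-\beta(E)=\beta(M)-\beta(Z)$. Uniqueness by Noetherian induction and multiplicativity via K\"unneth over $\Z/2$ are unproblematic. The only place I would ask for more care is the blow-up formula itself: the injectivity of $\pi^*\colon H^*(M;\Z/2)\to H^*(\mathrm{Bl}_Z M;\Z/2)$ and the splitting of the resulting short exact sequence deserve a line of justification in the real/mod~$2$ setting (it follows, for instance, from the Gysin map and the fact that $\pi_*\pi^*=\id$), since the usual references state the complex version. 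With that checked, the argument is complete.
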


The invariant $\beta_i$ is called the i-$th$ virtual Betti number, and the
polynomial $\beta$ the virtual Poincar\'e polynomial. By evaluation of the virtual Poincar\'e polynomial at $u=-1$ one recovers the Euler characteristic with compact supports \cite{MCP}.

\begin{prop}\label{beta-nash}(\cite{Fi,MCP2}) Let $X$ and $Y$ be Nash diffeomorphic compact real algebraic
 varieties. Then their
 virtual Poincar\'e polynomial coincide.
\end{prop}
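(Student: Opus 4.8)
The plan is to reduce the statement to the case where $X$ and $Y$ are \emph{nonsingular}, where it becomes almost immediate, so that all the real content is concentrated in the construction of $\beta$ on a category large enough to perform this reduction. If $X$ and $Y$ are nonsingular compact real algebraic varieties and $f\colon X\to Y$ is a Nash diffeomorphism, then $f$ is in particular a $C^\infty$ diffeomorphism, hence induces isomorphisms $H_i(X;\mathbb Z/2\mathbb Z)\cong H_i(Y;\mathbb Z/2\mathbb Z)$ for all $i$. Since by Proposition \ref{beta} the $i$-th virtual Betti number $\beta_i$ agrees on compact nonsingular real algebraic varieties with $\dim H_i(\cdot;\mathbb Z/2\mathbb Z)$, one concludes $\beta(X)=\beta(Y)$ in this case.

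For the general statement I would argue by induction on $d=\dim X=\dim Y$, working in the category of arc-symmetric sets, to which, following \cite{Fi}, the virtual Poincar\'e polynomial extends as an additive invariant. Given a Nash diffeomorphism $f\colon X\to Y$, let $X'$ (resp. $Y'$) be the set of points at which $X$ (resp. $Y$) fails to be a Nash submanifold of dimension $d$; these are closed arc-symmetric subsets of dimension $<d$, and since the property of being locally Nash-diffeomorphic to $\mathbb R^d$ is preserved by $f$, one has $f(X')=Y'$ and $f(X\setminus X')=Y\setminus Y'$. By additivity $\beta(X)=\beta(X\setminus X')+\beta(X')$ and likewise for $Y$; the induction hypothesis gives $\beta(X')=\beta(Y')$, so it remains to compare $\beta(X\setminus X')$ and $\beta(Y\setminus Y')$, which are Nash-diffeomorphic Nash manifolds. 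Passing to connected components (respected by $f$) and using that a connected Nash manifold is Nash-diffeomorphic to a union of connected components of a nonsingular real algebraic variety, together with a compactification of such a variety (obtained from resolution of singularities \cite{H}) that trades non-compactness for an arc-symmetric set of strictly smaller dimension, one is reduced to the nonsingular compact case treated above, which closes the induction.

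The step I expect to be the main obstacle is not the reduction above but its prerequisite: that $\beta$ is well defined and additive on the category of arc-symmetric sets. This is precisely what forces the argument to leave the algebraic category, since the only strata of $X$ that a Nash diffeomorphism is guaranteed to respect — the loci along which $X$ is, or fails to be, a Nash manifold of a given dimension — are typically not algebraic. The construction proceeds by declaring $\beta$ of a compact nonsingular real algebraic variety to be $\sum_i \dim H_i(\cdot;\mathbb Z/2\mathbb Z)\,u^i$, extending it to an arbitrary arc-symmetric set by additivity after compactifying and resolving singularities via \cite{H}, and checking independence of all these choices by dominating any two resolutions by a common one. Granting this, which is the substance of \cite{MCP,Fi}, the Nash invariance asserted here follows formally as sketched above.
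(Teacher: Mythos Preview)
The paper does not prove this proposition; it is quoted from \cite{Fi,MCP2}. Your sketch is essentially the argument of \cite{Fi}: one enlarges the target category from Zariski constructible sets to arc-symmetric sets, checks that $\beta$ extends additively there, and then exploits that a Nash diffeomorphism respects the Nash singular locus, which is arc-symmetric though generally not Zariski closed. So at the level of strategy you are on the right track and in agreement with the cited source.

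One point deserves tightening. Your induction, as written, is on the statement for compact real \emph{algebraic} varieties, but the sets $X'$ and $Y'$ you feed into the induction hypothesis are only arc-symmetric. The induction must therefore be set up from the outset for compact arc-symmetric sets (or, equivalently, one proves directly that $\beta$ is invariant under Nash diffeomorphisms of compact $\mathcal{AS}$-sets, which is how \cite{Fi} proceeds). Likewise, in the passage from the Nash manifold $X\setminus X'$ to a component of a nonsingular algebraic variety and then to a compactification, the boundary pieces you subtract are again arc-symmetric rather than algebraic, so the same enlargement is needed there. You acknowledge this prerequisite yourself, but the inductive statement should reflect it explicitly.

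For completeness: \cite{MCP2} reaches the same conclusion by a different mechanism, constructing a functorial weight filtration on Borel--Moore homology with $\mathbb Z/2$ coefficients whose associated spectral sequence yields the virtual Betti numbers; invariance under regular homeomorphisms (hence Nash diffeomorphisms) then follows from functoriality of the filtration. That route avoids the explicit induction on dimension but requires the heavier machinery of the weight complex.
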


\begin{rmk} In the non compact case the result is no longer true. For
 example an hyperbola in the plane in Nash diffeomorphic to the union
 of two lines, but the virtual Poincar\'e polynomial of the hyperbola
 is $u-1$ whereas the virtual Poincar\'e polynomial of the two lines is $2u$.
\end{rmk}

A crucial property of the virtual Poincar\'e polynomial is that the
degree of $\beta(X)$ is equal to the dimension of the Zariski constructible real algebraic set $X$. In particular, and
contrary to the Euler characteristics with compact supports, the
virtual Poincar\'e polynomial cannot be zero for a non-empty set.

\subsection{Nash triviality}
The Nash triviality Theorem of M. Coste and
M. Shiota \cite{CS} gives the local Nash triviality for proper Nash
maps with smooth fibres. We state it below in the form we need it later
 and show how to
derive it from Theorem A in \cite{CS}.

\begin{thm}\label{CS} Let $X$ and $Y$ be a real algebraic
 sets and $p: X \to Y$ be a regular map. Assume $X$ is
non singular. There exists a real algebraic
subset $Z\subset Y$ of strictly positive codimension in $Y$ such that
any two fibres of $p$ over a connected component of $Y\setminus Z$ are
Nash diffeomorphic.
\end{thm}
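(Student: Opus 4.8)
The plan is to reduce to a \emph{proper} family of Nash sets, invoke the Nash triviality theorem of Coste--Shiota, and then turn the resulting semialgebraic ``non-triviality locus'' into an algebraic one by taking Zariski closures. First I would reduce to the case where $Y$ is irreducible, treating each irreducible component of $Y$ separately and recording the (lower-dimensional) pairwise intersections of the components in $Z$; replacing then $Y$ by $\Reg Y$ and putting $\Sing Y$ into $Z$, I may assume $Y$ is a Nash manifold, Zariski-open in an irreducible variety, of dimension $d:=\dim Y$. Since over a connected component of $Y\setminus Z$ over which $p$ is Nash trivial all fibres are automatically pairwise Nash diffeomorphic, it suffices to produce an algebraic $Z\subset Y$ with $\dim Z<d$ such that $p$ is Nash trivial over each connected component of $Y\setminus Z$.

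By Sard's theorem for the Nash map $p$ (whose source is nonsingular), the set of its critical values is a semialgebraic subset of $Y$ of dimension $<d$; I would let $Y_1$ be its Zariski closure, enlarged by the Zariski closures $\overline{p(X_j)}$ of the images of those irreducible components $X_j$ of $X$ that do not dominate $Y$ (again of dimension $<d$). Then $p$ is a submersion of Nash manifolds over $Y\setminus Y_1$, so every fibre $X_y$ with $y\in Y\setminus Y_1$ is a Nash manifold. To force properness I would compactify fibrewise: fixing a closed embedding $X\hookrightarrow\R^m$, the graph map $x\mapsto(x,p(x))$ identifies $X$ with a closed Nash submanifold of $\R^m\times Y$, hence with a subset of $\PP^m(\R)\times Y$ that is open in its Zariski closure $\overline X$, and the projection $\overline p\colon\overline X\to Y$ is proper. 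I would choose a finite algebraic stratification $\overline X=\bigsqcup_\alpha S_\alpha$ into Nash manifolds with $X$ (smooth and open in $\overline X$) a union of strata and $\overline X\setminus X$ a union of strata, and then, applying Sard once more to each $\overline p|_{S_\alpha}$ and discarding into the bad locus the finitely many non-dominant images, obtain an algebraic $Y_2\supseteq Y_1$ of dimension $<d$ such that over $Y_0:=Y\setminus Y_2$ every stratum $S_\alpha$ maps submersively onto $Y_0$.

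Now $\overline p\colon\overline p^{-1}(Y_0)\to Y_0$ is a proper Nash map, submersive along each stratum, so Theorem~A of \cite{CS} — which yields Nash triviality of such a family, compatibly with the finite collection $\{S_\alpha\}$ of Nash subsets of the total space, over a finite Nash stratification of the base — gives a Nash stratification $Y_0=\bigsqcup_i N_i$ such that over each $N_i$ there is a Nash homeomorphism $\overline p^{-1}(N_i)\cong N_i\times\overline p^{-1}(y_i)$ over $N_i$ carrying each $S_\alpha\cap\overline p^{-1}(N_i)$ onto $N_i\times\bigl(S_\alpha\cap\overline p^{-1}(y_i)\bigr)$. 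Restricting such a trivialisation to the strata forming $X$ gives a Nash diffeomorphism $X\cap\overline p^{-1}(N_i)\cong N_i\times X_{y_i}$ over $N_i$, so all fibres of $p$ over $N_i$ are Nash diffeomorphic. Finally I would let $Z$ be the union of $Y_2$ with the Zariski closures of the $N_i$ of dimension $<d$ (and, undoing the first reduction, with $\Sing Y$ and the pairwise intersections of the irreducible components of $Y$): a finite union of algebraic sets of dimension $<d$, hence an algebraic subset of $Y$ of positive codimension. As the $d$-dimensional strata $N_i$ are open in $Y_0$ and cover it outside the lower strata, each connected component of $Y\setminus Z$ lies in a single $N_i$, and over it the fibres of $p$ are pairwise Nash diffeomorphic; this proves the theorem.

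The main obstacle is the non-properness of $p$: it is what makes the fibrewise compactification $\overline X$ necessary, and with it the careful choice of the dense Zariski-open $Y_0$ over which not only $\overline p$ but also the ``divisor at infinity'' $\overline X\setminus X$ is in general position with the fibres, so that the stratified Nash triviality supplied by \cite{CS} genuinely restricts to a Nash triviality of the open part $X$. (Alternatively one could first resolve the singularities of $\overline X$ via \cite{H} and apply the Nash manifold version of Theorem~A of \cite{CS} to the resulting proper Nash submersion, transporting the boundary along as a Nash subset.) A lesser difficulty is that \cite{CS} only produces a semialgebraic locus of non-triviality, which must be thickened to an algebraic $Z$ by taking Zariski closures while keeping track of connected components.
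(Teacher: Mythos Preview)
Your strategy is considerably more elaborate than needed, and rests on a misreading of Theorem~A of \cite{CS}. That theorem does \emph{not} require properness: it applies to any Nash submersion from a Nash submanifold $X\subset\R^{m}\times\R^n$ to $\R^n$ given by projection, and yields a finite partition of the base into Nash manifolds over each of which the map is Nash trivial. Hence once one has removed $\Sing Y$, the frontier of the image, and the critical value set (Sard), and replaced $X$ by its graph so that $p$ becomes a projection, Theorem~A applies directly to $p:X\to Y$ itself. This is exactly the paper's argument, and it is only a few lines. Your fibrewise compactification $\overline X\subset\PP^m(\R)\times Y$, the stratification of $\overline X$, and the restriction back to $X$ are all unnecessary detours introduced to manufacture a properness hypothesis that was never required.

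There is also a genuine gap at the point where you do invoke Theorem~A. You apply it to the \emph{singular} space $\overline X$ and assert that it produces a Nash homeomorphism $\overline p^{-1}(N_i)\cong N_i\times\overline p^{-1}(y_i)$ carrying each stratum $S_\alpha$ to a product. Theorem~A says nothing of the sort: its hypothesis is that the source be a Nash manifold and the map a submersion, and its conclusion concerns only that manifold, with no clause about preserving a prescribed family of subsets. What you describe is a stratified Nash triviality theorem for proper maps from singular semialgebraic sets---a Nash-category analogue of Hardt's theorem. Such statements do exist (in Shiota's work, or in the normal-crossing form of \cite{FKS}), but they are not Theorem~A, and citing them here would be using a substantially heavier tool than the problem needs. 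Your parenthetical alternative via resolution of $\overline X$ has the same defect: transporting the boundary divisor along the trivialisation again exceeds what plain Theorem~A provides. The passage from a semialgebraic bad locus to an algebraic $Z$ by Zariski closure, on the other hand, is correct and is exactly what the paper does.
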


\begin{proof} Note that it suffices to work in the category of Nash
  manifolds and Nash mappings. Actually, if we remove from $Y$ its
  algebraic singular points $\Sing Y$, then $Y\setminus \Sing Y$
  becomes a Nash manifold. Moreover if we obtain a semialgebraic subset $Z'$ of $Y$ such that $p^{-1}(y_1)$ and $p^{-1}(y_2)$ are Nash diffeomorphic for any points $y_1$ and $y_2$ in one connected component of $Y-Z'$ then the Zariski closure $Z$ of $Z'$ 
fulfills 
the requirements.

Now we treat the case $\dim X\ge\dim Y$ and $\dim p(X)=\dim Y$ since
otherwise we can remove $p(X)$ from $Y$. We may assume that
$p(X)$ is equal to $Y$ by removing if necessary $\overline{p(X)}-\Int
p(X)$ from $Y$, where the closure and interior are those in $Y$. We
may assume moreover that $p$ is a submersion because the critical value set is semialgebraic and of smaller dimension by Sard's Theorem. 
Finally, we may assume that $X$ and $Y$ are included in
$\R^m\times\R^n$ and $\R^n$ respectively, and that $p$ is the restriction to $X$ of the projection $\R^m\times\R^n\to\R^n$ by replacing $X$ with the graph of $p$. 
Then $p:X\to Y$ satisfies the conditions of Theorem A in \cite{CS}. 
\end{proof}

Let $M$ be a Nash manifold and $X\subset M$ be a Nash subset of
$M$. Then $X$ has only normal crossing if for each $a\in M$ there
exists a local coordinate system $(x_1,\ldots,x_n)$ at $a\in M$ such
that $X$ is a union of some coordinate spaces in a neighbourhood of $a$
in $M$.

The Nash isotopy Lemma (Theorem I in \cite{FKS}) of T. Fukui, S. Koike and M. Shiota
extends the Nash triviality obtained in Theorem \ref{CS} to the normal
crossing situation. A subvariety $N$ of a non singular real algebraic variety $M$ has only normal crossings if at any point of $N$ there exists a local system of coordinates such that $N$ is a union of some coordinates spaces.

We state below the Nash isotopy Lemma as we need it 
later, 
and prove
how to derive it from Theorem I in \cite{FKS}.

\begin{thm}\label{FKS} Let $M$ be a non singular real algebraic variety and
 $N_1,\ldots,N_k$ be non singular real algebraic subvarieties of $M$ such that $\cup_{i=1}^kN_i$ has only
 normal crossing. Let $Y$ be a real algebraic variety and $p :M \to Y$ be
 a proper 
 regular 
 map. There exists a real algebraic
subset $Z\subset Y$ of strictly positive codimension in $Y$ such that
for any connected component $P$ of $Y\setminus Z$, there exist $y\in
P$ and a Nash diffeomorphism 
$$\phi:(M;N_1,\ldots,N_k) \to \big(M \cap
p^{-1}(y);N_1\cap p^{-1}(y),\ldots,N_k\cap
p^{-1}(y)\big)\times P$$
such that $p \circ \phi^{-1}:\big(M \cap p^{-1}(y)\big)
\times P \to P$ is the canonical projection.
\end{thm}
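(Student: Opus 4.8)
The plan is to reduce Theorem~\ref{FKS} to Theorem~I in \cite{FKS} by the same scheme used in the proof of Theorem~\ref{CS}: pass to the category of Nash manifolds and Nash mappings, arrange that $p$ is a submersion onto $Y$, and then quote the cited isotopy lemma. First I would remove from $Y$ the algebraic singular locus $\Sing Y$, so that $Y\setminus \Sing Y$ is a Nash manifold; as before it suffices to produce a semialgebraic $Z'$ with the stated triviality over each connected component of $(Y\setminus\Sing Y)\setminus Z'$, since the Zariski closure $Z$ of $Z'\cup\Sing Y$ then has the required codimension property (using that $\beta$, or just dimension, drops on passing to a proper subvariety). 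Next, since $p$ is proper, $p(M)$ is a closed semialgebraic subset of $Y$; after removing $\overline{p(M)}\setminus\Int p(M)$ we may assume $p$ is surjective with $\dim p(M)=\dim Y$ on the remaining Nash manifold, and by Sard's theorem the set of critical values is semialgebraic of smaller dimension, so after removing it as well we may assume $p$ is a proper Nash submersion.

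With these reductions in place, the hypotheses of Theorem~I in \cite{FKS} are met: we have a proper Nash submersion $p:M\to Y'$ (with $Y'$ the residual Nash manifold) together with a finite family $N_1,\dots,N_k$ of nonsingular Nash subvarieties whose union has only normal crossings, and the normal crossing condition together with nonsingularity of each $N_i$ is exactly the local model data required there. I would also want to check that each restriction $p|_{N_i}$ is itself a submersion onto (an open subset of) $Y'$, which one can again secure by removing from $Y'$ the finitely many additional semialgebraic sets of critical values of the maps $p|_{N_i}$ and $p|_{N_{i_1}\cap\cdots\cap N_{i_j}}$; this is harmless because there are finitely many such intersections and each contributes a semialgebraic set of strictly smaller dimension. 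Then Theorem~I in \cite{FKS} yields, over each connected component $P$ of the complement $Y'\setminus Z'$ of the union $Z'$ of all these removed sets, a point $y\in P$ and a Nash diffeomorphism $\phi$ trivialising the stratified pair $(M;N_1,\dots,N_k)$ over $P$, compatibly with $p$, which is precisely the assertion.

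The main obstacle I anticipate is bookkeeping the passage from the cited statement — which is local over the base, or stated for a single trivialising chart — to the global-over-a-connected-component form used here, and making sure the embedding normalisations (realising $M$ and $Y$ as Nash submanifolds of affine space, replacing $M$ by the graph of $p$ so that $p$ becomes a linear projection, as in the proof of Theorem~\ref{CS}) are compatible with carrying along the subvarieties $N_i$; one must verify that the graph embedding sends the normal crossing configuration $\cup N_i$ to a normal crossing configuration of the corresponding graphs, which is clear since it is a Nash diffeomorphism onto its image. A secondary point is checking that ``proper'' is preserved under all the removals: removing a closed subvariety from the base and restricting keeps the map proper, so this is routine, but it should be stated. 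Once these compatibilities are recorded, the theorem follows by direct appeal to \cite{FKS}.
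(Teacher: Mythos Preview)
Your reductions --- removing $\Sing Y$, passing to the locus where $p$ is surjective, and excising the critical values of $p$ and of all the restrictions $p|_{N_{i_1}\cap\cdots\cap N_{i_s}}$ --- match the paper's first step. The gap is the step you flag only as ``bookkeeping'': it is in fact the substantive point of the proof. Theorem~I in \cite{FKS} does not apply over an arbitrary Nash base; it requires the parameter space to be Nash diffeomorphic to an open simplex. So after your reductions you still need to arrange that each connected component of the residual base has this form, and nothing in your argument secures it.

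The paper handles this by invoking the triangulation theorem for semialgebraic sets (Theorem~9.2.1 in \cite{BCR}) in the form where the restriction of the triangulating map to each open simplex is a Nash embedding. One then further enlarges $Z'$ by the images of the simplices of dimension strictly less than $\dim Y$, after which each connected component of the complement is Nash diffeomorphic to a single open simplex, and Theorem~I of \cite{FKS} applies directly. The unbounded case is reduced to the bounded one via a Nash embedding into a sphere. Without this triangulation step your appeal to \cite{FKS} is not justified, and the ``local-to-global'' passage you mention cannot be carried out by patching, since Nash trivialisations over overlapping charts need not glue.
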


In particular, any two fibres of $p$ along $P$ are Nash diffeomorphic.

\begin{proof}[Proof of Theorem \ref{FKS}]
Similarly to the proof of Theorem \ref{CS}, there exists an algebraic
subvariety $Z \subset Y$ such that $Y\setminus Z$ is smooth and $p$
together with its restrictions to $N_{i_1}\cap \cdots \cap N_{i_s}$,
for $0\leq i_1<\cdots<i_s\leq k$, are submersions onto $Y\setminus
Z$. To apply Theorem I in \cite{FKS}, it remains to prove that the
connected components of $Y\setminus Z$ are Nash diffeomorphic to an open simplex.

Let $X$ be a bounded semialgebraic set included in $\R^n$. 
Then the triangulation theorem of semialgebraic sets states that there
exist a finite simplicial complex $K$ and a semialgebraic
homeomorphism from the underlying polyhedron $|K|$ to $\overline X$
such that $\pi^{-1}(X)$ is the union of some open simplexes in $K$
(Theorem 9.2.1 in \cite{BCR}). Moreover we can choose $\pi$ so that the restriction to each open simplex is a Nash embedding. 
Hence if we remove the union of $\pi(\sigma)$ for simplexes $\sigma$
in $K$ of dimension smaller than $\dim X$, then each connected
component of $X$ become Nash diffeomorphic to an open simplex.

In case $X$ is not bounded in $\R^n$, embed $\R^n$ in the sphere $S^n$
and then embed
$S^n$ in $\R^{n+1}$ by Nash embeddings, so that $X$ becomes bounded in
$R^{n+1}$ and we may apply the preceding argument.
\end{proof}

\subsection{Resolution of singularities}
The desingularisation Theorem of H. Hironaka \cite{H} (or
\cite{BM} for the form used in this paper) transforms a
singular variety into a non singular one together with normal crossings
with the exceptional divisors of the resolution. In view of the proof
of Theorem \ref{main} in section $2$, it will enable us
to use both Nash Triviality Theorem and Nash Isotopy Lemma after a
resolution of the singularities of the source space in the singular
case.

We recall that a smooth submanifold $N\subset M$ of a manifold $M$ and
a normal crossing divisor $D\subset M$ have
simultaneously only normal crossings if, locally, there exist local
coordinates such that $N$ is the intersection of some coordinate hyperplanes and $D$ is the union
of some coordinate hyperplanes.

\begin{thm}\label{hiro}(\cite{BM}) Let $M$ be a non singular real algebraic variety and
 $X\subset M$ be a real algebraic subvariety of $M$. There exists a finite
 sequence of blowings-up
$$\xymatrix{  M=M^s   \ar[r]^{\pi^{s-1}} &  M^{s-1} \ar[r]
 & \cdots \ar[r]^{\pi^{1}} & M^1=M}$$
with smooth centres $C^i\subset M^i$ such that, if $X^1=X$ and
$E^1=\emptyset$, and if we denote by $X^{j+1}$ the strict transform of
$X^j$ by $\pi^j$ and by $E^{j+1}$ the exceptional divisor
$E^{j+1}=(\pi^j)^{-1}(C^j\cup E^j)$, then 
\begin{enumerate}
\item each $C^j$ is included in $X^j$, and is of dimension smaller than $\dim X^j$,
\item each $C^j$ and $E^j$ have simultaneously only normal crossings,
\item $X^s$ is non singular,
\item $X^s$ and $E^s$ have simultaneously only normal crossings.
\end{enumerate}
\end{thm}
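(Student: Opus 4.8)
The plan is not to reprove resolution of singularities, but to quote the constructive desingularization algorithm of Bierstone and Milman \cite{BM}, whose output is precisely a finite sequence of blowings-up along smooth centres having the properties (1)--(4); the only thing left is to match their bookkeeping with the recursive definitions of $X^{j+1}$ and $E^{j+1}$ used in the statement.

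First I would recall the shape of the construction. To the ambient smooth variety $M$ together with the (coherent) ideal sheaf of $X$, one attaches a local, upper semicontinuous invariant $\mathrm{inv}$, built from the orders of the ideal along successive hypersurfaces of maximal contact together with a count of the exceptional components accumulated so far. The locus where $\mathrm{inv}$ attains its maximum is smooth, and one takes it as the centre $C^j$ of the next blowing-up. Functoriality of $\mathrm{inv}$ makes the whole sequence canonical, and a well-ordering argument shows that $\mathrm{inv}$ strictly decreases after finitely many blowings-up, so the process terminates with $X^s$ non singular: this is item (3). Since at each stage $C^j$ is contained in the maximal locus of $\mathrm{inv}$, which lies in the singular locus of the strict transform $X^j$ and therefore has dimension $<\dim X^j$, item (1) follows.

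Next come the normal crossing conditions (2) and (4), which are exactly what the exceptional-divisor component of $\mathrm{inv}$ is designed to control: admissibility of a centre in the Bierstone--Milman algorithm requires it to have only normal crossings with the accumulated exceptional divisor $E^j$, which yields (2), and at termination the same mechanism forces $X^s$ and $E^s$ to have only normal crossings, which yields (4). Both clauses are explicit in \cite{BM}.

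Finally I would observe that the algorithm is defined over any field of characteristic zero, in particular over $\R$, and never leaves the algebraic category: each $M^{j+1}$ is again a non singular real algebraic variety and each $\pi^j$ is a projective regular map, so the statement holds verbatim. There is no genuine obstacle internal to the present paper; the whole difficulty is contained in \cite{BM}, and the one point that does require attention is merely the translation of conventions, namely that the strict transform $X^{j+1}$ and the divisor $E^{j+1}=(\pi^j)^{-1}(C^j\cup E^j)$ defined recursively here coincide with the objects tracked by the algorithm, which is immediate.
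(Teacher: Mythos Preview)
Your proposal is correct, and in fact it does more than the paper itself: the paper does not prove this theorem at all but simply states it as a citation from \cite{BM} (with \cite{H} mentioned as the original source), followed only by a remark explaining why condition (2) will be relevant later. Your sketch of how the Bierstone--Milman invariant yields properties (1)--(4) is accurate and more informative than the bare citation, though strictly speaking no argument is required here.
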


\begin{rmk} The fact that the centre $C^j$ together with the divisor
  $E^j$ have simultaneously only normal crossings at any stage of the
  resolution process will enable to use Theorem \ref{FKS} in the proof of Theorem \ref{main}.
\end{rmk}




\section{Virtual Poincar\'e polynomial of the link}\label{}


Next result states the constancy of the virtual Poincar\'e polynomial of the fibres
of a regular map along the connected components of an algebraic
stratification of the target space.

\begin{thm}\label{main} Let $X\subset \R^n$ and $Y\subset \R^m$ be a real algebraic
 sets and $f: X \to Y$ be a regular map. There exist a
 real algebraic subset $Z\subset Y$ of strictly positive codimension
 in $Y$ such that for any $a,b$ in the same connected component of
 $Y\setminus Z$, the virtual Poincar\'e of $f^{-1}(a)$ is equal to the virtual Poincar\'e of $f^{-1}(b)$.
\end{thm}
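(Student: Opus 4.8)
The plan is to reduce the statement to the three tools assembled in Section~1 --- the Nash triviality theorem (Theorem~\ref{CS}), the Nash isotopy lemma (Theorem~\ref{FKS}), Hironaka resolution (Theorem~\ref{hiro}) --- together with the additivity and Nash-invariance of $\beta$ (Propositions~\ref{beta} and \ref{beta-nash}). First I would do two harmless normalisations: replace $X$ by the graph of $f$ so that $f$ is the restriction of a linear projection, and discard from $Y$ a proper algebraic subset so that the remaining part of $Y$ is smooth, $f$ is surjective onto it, and (after a further shrinking, using that the constructed bad locus is always of positive codimension and taking Zariski closures at the end) we may induct on $\dim X$. If $X$ is non-singular we are essentially done: Theorem~\ref{CS} applied to $f:X\to Y$ produces $Z$ of positive codimension such that any two fibres over a connected component of $Y\setminus Z$ are Nash diffeomorphic; since those fibres are algebraic sets and, after intersecting with a large sphere / compactifying (or simply noting the fibres are closed algebraic subsets of $\mathbf R^n$, hence we may apply $\beta$ directly and use additivity to handle non-compactness), Proposition~\ref{beta-nash} gives equality of $\beta$ on the fibres. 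The subtlety that $\beta$-invariance in Proposition~\ref{beta-nash} is stated for \emph{compact} varieties is dealt with by writing each fibre as the difference of its (compact) closure in a Nash compactification and the part at infinity, and applying the Nash trivialisation fibrewise to both pieces.

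The substance is the singular case, handled by resolution. Apply Theorem~\ref{hiro} to $X\subset M$ with $M$ a non-singular real algebraic variety containing $X$ (e.g. $M=\mathbf R^n$), obtaining a tower of blow-ups $\pi=\pi^1\circ\cdots\circ\pi^{s-1}:M^s\to M$ with smooth strict transform $X^s$ having only normal crossings with the exceptional divisor $E^s=\bigcup_i N_i$, where the $N_i$ are the smooth irreducible components. Set $g=f\circ\pi|_{X^s}:X^s\to Y$. The composite $g$ is regular but not proper on $X^s$ in general; however $\pi$ itself is proper, so for a fixed $y\in Y$ the fibre $\pi^{-1}(f^{-1}(y))\cap X^s$ maps properly. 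The key computation is an additivity identity for $\beta$ of the fibre: for $y\in Y$,
\[
\beta\big(f^{-1}(y)\big)=\sum_{I}(-1)^{?}\,\beta\big(X^s_I\cap g^{-1}(y)\big),
\]
more precisely, since $\pi:X^s\to X$ restricts over $f^{-1}(y)$ to a map that is an isomorphism away from the centres and an iterated blow-up (hence with fibres that are iterated projective bundles) along them, I would instead use the cleaner bookkeeping: stratify $X$ by the loci $C^j$ (and their transforms), so that $f^{-1}(y)$ decomposes as a finite disjoint union of locally closed algebraic pieces, and over each piece $\pi$ is a Nash-locally trivial fibre bundle with compact algebraic fibre; then $\beta(f^{-1}(y))$ is, by additivity and multiplicativity of $\beta$, a $\mathbf Z[u]$-linear combination (with coefficients $\beta$ of those projective-bundle fibres, which are universal) of the numbers $\beta$ of the strata of $f^{-1}(y)$ and of the strata of the fibres of the restrictions of $g$ to $X^s$, to each $N_i$, and to all intersections $N_{i_1}\cap\cdots\cap N_{i_t}$. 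Each such restricted map is a regular map from a non-singular variety to $Y$, and the union of the relevant $N_i\cap X^s$ has only normal crossings, so Theorem~\ref{FKS} (together with Theorem~\ref{CS} for $X^s\to Y$ itself) provides a single $Z\subset Y$ of positive codimension over whose components all of these fibres --- and their normal-crossing strata --- are simultaneously Nash trivial, hence have constant $\beta$ by Proposition~\ref{beta-nash} (again splitting compact part and part at infinity). Since the combination expressing $\beta(f^{-1}(y))$ has universal coefficients, $\beta(f^{-1}(y))$ is constant on each connected component of $Y\setminus Z$. Taking $Z$ to be the union of all the bad loci arising (finitely many, since the resolution is finite and the number of intersections of the $N_i$ is finite) and replacing it by its Zariski closure finishes the argument.

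The main obstacle is the bookkeeping of the decomposition in the singular case: one must express $\beta(f^{-1}(y))$ as a combination, with coefficients independent of $y$, of the $\beta$'s of fibres of \emph{non-singular} maps to which Theorems~\ref{CS} and~\ref{FKS} apply, and this requires carefully using that $\pi$ restricted over each resolution stratum is Nash-locally trivial with a \emph{fixed} projective-bundle fibre (so multiplicativity of $\beta$ contributes only universal factors) and that the normal-crossing condition of Theorem~\ref{hiro}(2),(4) is exactly what lets Theorem~\ref{FKS} trivialise the divisor strata simultaneously with the total space. A secondary technical point, recurring throughout, is the passage from the compact statement of Proposition~\ref{beta-nash} to the non-compact fibres: each time one invokes Nash triviality one should trivialise a Nash \emph{compactification} of the relevant (locally closed) piece together with its boundary, and then use additivity of $\beta$ to recover the open piece; this is routine but must be stated once carefully. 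Everything else --- the normalisations at the start, Sard's theorem to make maps submersive, removing non-dominant parts of $f(X)$ --- is exactly as in the proofs of Theorems~\ref{CS} and~\ref{FKS} already given.
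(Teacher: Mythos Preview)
Your overall architecture matches the paper's: pass to the graph to make the map proper, resolve the source via Theorem~\ref{hiro}, and at the end apply Theorem~\ref{CS} to $X^s\to Y$ and Theorem~\ref{FKS} to the normal-crossing divisor $\tilde E^s=X^s\cap E^s$ inside $X^s$. The paper's reduction to properness is slightly cleaner than yours: it takes the Zariski closure $\overline G\subset\PP^n(\R)\times\R^m$ and applies the proper case to \emph{both} $\overline G\to Y$ and $(\overline G\setminus G)\to Y$, so that every fibre occurring thereafter is compact and Proposition~\ref{beta-nash} applies directly; your repeated ``Nash compactification plus boundary'' manoeuvre is then unnecessary.

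The genuine gap is the central bookkeeping. Your claim that ``over each piece $\pi$ is a Nash-locally trivial fibre bundle with compact algebraic fibre'' with ``universal'' $\beta$-coefficients coming from projective bundles is false for the strict transform: the blow-up $\pi^j:M^{j+1}\to M^j$ is a $\PP^{c-1}$-bundle over $C^j$, but its restriction to the strict transform $X^{j+1}$ has, over $C^j$, the projectivised \emph{normal cone} of $C^j$ in the singular variety $X^j$, whose fibres vary from point to point. So multiplicativity buys nothing here. The paper bypasses this by using only additivity, one blow-up at a time: since $\pi^j$ is an isomorphism off $C^j$, one has $\beta(X^j_y)-\beta(C^j_y)=\beta(X^{j+1}_y)-\beta(\tilde D^{j+1}_y)$ and similarly for $\tilde E^j$, and telescoping yields
\[
\beta(X_y)=\sum_{i=1}^{s-1}\big(\beta(C^i_y)-\beta(C^i_y\cap E^i_y)\big)+\beta(X^s_y)-\beta(\tilde E^s_y).
\]
Every term on the right is the fibre of a proper map from either a smooth variety ($C^i$, $X^s$: Theorem~\ref{CS}) or a normal-crossing configuration ($C^i\cap E^i$ by Theorem~\ref{hiro}(2), $\tilde E^s$ by Theorem~\ref{hiro}(4): Theorem~\ref{FKS}), so Proposition~\ref{beta-nash} applies termwise. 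Your hinted induction on $\dim X$ would also work --- write $\beta(X_y)=\beta(C_y)+\beta(X^s_y)-\beta(\tilde E^s_y)$ with $C$ the image of the exceptional locus and use $\dim C<\dim X$ --- but you still have to replace the projective-bundle argument by this plain additivity.
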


\begin{rmk} In the statement of Theorem \ref{main} we cannot avoid
  taking the connected components of $Z\setminus Y$, as illustrated by
  the square map from the line to itself. Indeed, the virtual
  Poincar\'e polynomial of the fibres is equal to $2$ along strictly
  positive real numbers and $0$ along strictly negatives ones.
\end{rmk}

Before giving the proof of Theorem \ref{main}, we apply it to define
the virtual Poincar\'e polynomial of the link at a point of a real algebraic variety.
Let $X\subset \R^n$ be a real algebraic variety and $a\in X$ be a point in
$X$. Let $S(a,\eps)$ denote the sphere with centre $a$ and radius
$\eps >0$ in $\R^n$. The set $lk(X,a)=X\cap S(a,\eps)$ is the link of $X$ at $a$.

\begin{cor}\label{link} Let $X\subset \R^n$ be a real algebraic variety and $a\in X$ a
 point in $X$. There exists $\delta >0$ such that for $\eps,
 \eps'>0$, the virtual Poincar\'e polynomial of $X\cap S(a,\eps)$ is equal to
 the virtual Poincar\'e polynomial of $X\cap S(a,\eps')$, provided that
 $\eps<\delta$ and $\eps'<\delta$.
\end{cor}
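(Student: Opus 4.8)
The plan is to deduce Corollary~\ref{link} from Theorem~\ref{main} by exhibiting a single regular map whose fibres over a suitable segment are precisely the links $X\cap S(a,\eps)$. First I would introduce the regular map $f:X\setminus\{a\}\to\R$ defined by $f(x)=\|x-a\|^2$; this is the restriction to the real algebraic set $X\setminus\{a\}$ of a polynomial map, hence regular, and its fibre over $\eps^2>0$ is exactly $X\cap S(a,\eps)$. (Working with the squared norm avoids the square root and keeps everything polynomial; deleting the point $a$ keeps the source an affine real algebraic set and removes the only place where $f^{-1}(0)$ could behave badly.) Applying Theorem~\ref{main} to $f$ produces an algebraic subset $Z\subset\R$ of strictly positive codimension, i.e. a finite set of points $0\le t_1<\dots<t_r$, such that the virtual Poincar\'e polynomial of $f^{-1}(t)$ is constant on each connected component of $\R\setminus\{t_1,\dots,t_r\}$.

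The second step is to choose $\delta>0$. If $r=0$ or if $0$ is not among the $t_j$, we may take $\delta$ to be the smallest strictly positive $t_j$ (or $\delta=1$, say, if there is none); otherwise, since the $t_j$ are finite in number, there is a smallest one among those that are strictly positive, and I set $\delta$ to equal it (again with a fallback value such as $\delta=1$ if all the $t_j$ vanish or are negative, which cannot happen for $j$ with $t_j$ in the image of $f$ but costs nothing to include). Then the interval $(0,\delta^2)$ lies entirely inside one connected component of $\R\setminus\{t_1,\dots,t_r\}$, so for any $\eps,\eps'\in(0,\delta)$ the values $\eps^2$ and $\eps^2{}'$ lie in that component and hence $\beta\big(f^{-1}(\eps^2)\big)=\beta\big(f^{-1}(\eps^2{}')\big)$, which is exactly $\beta\big(X\cap S(a,\eps)\big)=\beta\big(X\cap S(a,\eps')\big)$.

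There is essentially no hard analytic obstacle here: the content is entirely in Theorem~\ref{main}, and the corollary is a matter of packaging the link as a fibre of a regular map. The only points requiring a word of care are (a) checking that $X\setminus\{a\}$ is still a real algebraic set (it is, being the intersection of $X$ with the Zariski-open complement of a point, hence the zero set of the ideal of $X$ together with an extra equation after a standard trick, or more simply one notes $X\setminus\{a\}$ is itself affine real algebraic since points are algebraic), and (b) observing that we do not actually need the full local conic structure theorem for the statement as phrased — Theorem~\ref{main} already guarantees the constancy of $\beta$ on a punctured neighbourhood of $0$, which is all that is claimed. If one wished additionally to know that this common value of $\beta$ is a genuine local invariant of the pair $(X,a)$ independent of the embedding, that would be a separate remark invoking the semialgebraic-homeomorphism invariance of the link together with the fact that $\beta$ is then computed on a well-defined topological type; but the statement of Corollary~\ref{link} as written is fully obtained from the two steps above.
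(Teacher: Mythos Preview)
Your approach is essentially the same as the paper's: apply Theorem~\ref{main} to the squared-distance map and use that a positive-codimension algebraic subset of $\R$ is finite. The paper simply takes $d_a:X\to\R$, $d_a(x)=\|x-a\|^2$, without deleting $a$; the fibre over $0$ is then $\{a\}$, which causes no trouble, so your detour through $X\setminus\{a\}$ (and the slightly shaky justification that this is again an algebraic subset of $\R^n$---it is not, though it is affine after re-embedding) is unnecessary.

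One small slip: with your choice $\delta=t_1$ (the smallest strictly positive element of $Z$), the condition $\eps<\delta$ gives $\eps^2<t_1^2$, not $\eps^2<t_1$; when $t_1>1$ this does not force $\eps^2$ into the component $(0,t_1)$. You want $\delta=\sqrt{t_1}$ (or any $\delta\le\min(1,t_1)$).
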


\begin{proof} We apply Theorem \ref{main} to the map $d_a:X \to \R$
  defined by taking the square of the
 Euclidean distance to $a$ in $\R^n$, namely $d_a(x)= ||x-a||^2$. Then
 there exist a finite number of points $-\infty=b_0<b_1<\cdots < b_r<b_{r+1}=+\infty$ in $\R$ such
 that the virtual Poincar\'e polynomial of the fibres of $d_a$ are
 constant along the intervals $]b_{i},b_{i+1}[$ for $i=0,\ldots,r$. In
 particular there exists $\delta$ such that $\eps>0$ and $\eps'>0$ belong
 to the same interval if $\eps<\delta$ and $\eps'<\delta$.
\end{proof}

\begin{rmk} If $X$ has at most an isolated singularity at $a$, then
  Theorem \ref{CS} asserts that the link is a Nash invariant, and
  therefore its virtual Poincar\'e polynomial is well defined by
  Proposition \ref{beta-nash}. If the singular
  component containing $a$ is a curve, then the link of $X$ at $a$ has an
  isolated singularity and the link is a blow-Nash invariant by
  Theorem IIb in \cite{Koike}. This implies again that its virtual Poincar\'e polynomial is well defined.

In the general case, the question of the existence of continuous
moduli for blow-Nash equivalence remains open, and Corollary \ref{link} says only that the virtual Poincar\'e polynomial of the link is well defined.
\end{rmk}



\begin{proof}[Proof of Theorem \ref{main}] We reduce the proof to the
 proper case using the graph of $p$. In the proper case, we express
 the virtual Poincar\'e polynomial of the fibres of $p$ in terms of a
 resolution of the singularities of $p$. Then, we use Theorem \ref{CS}
 and Theorem \ref{FKS} to obtain Nash triviality results along the
 resolution of $p$ in the smooth and in the normal crossing case.\\

\textbf{Reduction to the proper case}

We first reduce the proof of theorem \ref{main} to the case of a
proper map $f$.
Let $G\subset \R^n\times \R^m$ denote the graph of $f$ and consider its Zariski closure
$\overline G$ in $\PP^n(\R)\times \R^m$. Then the projection
$p:\overline G \to Y$ together with its restriction to $\overline G
\setminus G$ are proper regular maps. 

By additivity of the virtual Poincar\'e polynomial, it suffices now to
prove theorem \ref{main} for $p:\overline G \to Y$ and $p:\overline
G\setminus G \to Y$.\\

\textbf{The proper case}

We assume that $f$ is proper.
Choose a resolution of the singularities of $X\subset \R^n$ as in
Theorem \ref{hiro}. Denote by $D^{j+1}$ the exceptional divisor $D^{j+1}=(\pi^j)^{-1}(C^j)$ of
the $j$-th blowing-up of $M^j$ along $C^j$, by $\tilde E^j$ the
intersection $\tilde E^j=X^j\cap E^j$
of the exceptional divisor $E^j$ with the strict transform $X^j$, and
by $\tilde D^{j+1}$ the intersection $\tilde D^{j+1}=X^{j+1} \cap D^{j+1}$. Denote by
$\sigma^j$ the composition map
$$\sigma^j=f\circ \pi^1 \circ \cdots \circ \pi^{j} :M^{j+1}\to Y,$$
which is proper.
Let  $X_y^{j+1}$ denote the fibre over $y\in Y$ of the restriction of $\sigma^j$ to
$X^{j+1}$, namely
$$X_y^{j+1}=(\sigma^j_{|X^{j+1}})^{-1}(y).$$
Denote similarly by $C_y^{j+1}$, $\tilde D_y^{j+1}$ and $\tilde E_y^{j+1}$ the
fibres over $y\in Y$ of the restrictions of $\sigma^j$ to $C^{j+1}$, $\tilde D^{j+1}$ and
$\tilde E^{j+1}$ respectively. 

\vskip 3mm
The proof proceeds by induction on the number of blowings-up. We detail the first two blowings-up in order to illustrate the difference in the use of Theorems \ref{CS} and \ref{FKS}.

\vskip 3mm
\textit{First blowing-up}. 

Consider the blowing-up of $M^1=\R^n$ with centre $C^1$. By Theorem \ref{CS}, there exists $Z^1\subset Y$ of
codimension at least one in $Y$ such that the restriction ${f}_{|C^1}$ of $f$ to $C^1$ is Nash trivial along any connected component of
$Y\setminus Z^1$, and $C^1_y$ is smooth for any $y\in Y\setminus Z^1$,
since $C^1$ is non singular (note that the fibres might
be empty in
case $f$ is not a submersion). In particular for $a,b\in Y\setminus
Z^1$ in the same connected component of $Y\setminus Z^1$, the virtual
Poincar\'e polynomial of $C_a^1$ and $C_b^1$ coincide by Proposition \ref{beta-nash} since $C_a^1$
and $C_b^1$ are Nash diffeomorphic compact real algebraic sets.

Even if $E^2$ is smooth in $M^2$, its intersection $\tilde E^2$ with $X^2$ is no
longer smooth in general, and we can not apply Theorem \ref{CS} to find a Nash
trivialisation of $\tilde E^2$ at this stage.

At the level of virtual Poincar\'e polynomials, by additivity one obtains
$$\beta(X^1_y)=\beta(C^1_y)+\beta(X^2_y)-\beta(\tilde E^2_y)$$
for $y\in Y$, where moreover $\beta(C^1_a)=\beta(C^1_b)$ for $a,b$ in the same connected component of $Y\setminus Z^1$.

\vskip 3mm
\textit{Second blowing-up}.

Consider the blowing-up of $M^2$ with centre $C^2$. As $C^2$ is non singular, there exists $Z^2_1\subset Y$ of codimension at least one in $Y$ such that the
restriction $\sigma^2_{|C^2}$ of $\sigma^2$ to $C^2$ is Nash trivial
along $Y\setminus Z^2_1$ by Theorem \ref{CS}. As a consequence $\beta(C^2_a)=\beta(C^2_b)$ for
$a,b \in Y\setminus Z^2_1$ by Proposition \ref{beta-nash}.

This blowing-up gives rise the following commutative diagrams
$$\xymatrix{ & \tilde D^3_y \ar[r]\ar[d]_{\pi^2_y} &  X^3_y \ar[d]^{\pi^2_y}
 &&&&     D^3_y \cap (\pi^2)^{-1}(\tilde E^2_y)
 \ar[r]\ar[d]_{\pi^2_y} &    (\pi^2_y)^{-1}(\tilde E^2_y)  \ar[d]^{\pi^2_y}   \\
   &       C^2_y \ar[r]  & X^2_y                      &&&&
          C^2_y\cap E^2_y    \ar[r]  &   \tilde E^2_y   }$$
for $y\in Y$ (note that $C^2_y\cap E^2_y= C^2_y\cap \tilde E^2_y$
since $C^2$ is included in $X^2$). Note that $C^2\cap E^2$ is non singular since $C^2$ and $E^2$ have simultaneously normal crossings, so we can apply Theorem \ref{CS} to obtain a Nash trivialisation of the intersection
$C^2\cap E^2$ outside a codimension at least one subset $Z^2_2 \subset
Y$ of $Y$. In particular the virtual Poincar\'e polynomials of $C^2_a\cap E^2_a$ and $C^2_b\cap E^2_b$ coincide for
$a,b$ in the same connected component of $Y\setminus Z^2_2$ by Proposition \ref{beta-nash}.

Note moreover that $(\pi^2)^{-1}(\tilde E^2_y) \setminus \big (D^3 \cap
(\pi^2)^{-1}(\tilde E^2_y)\big )$ is isomorphic to $ \tilde E^3 \setminus
\tilde D^3$ since $\pi^2$ is an isomorphism from $M^3 \setminus D^3$ to $M^2 \setminus C^2$.
Therefore at the level of virtual Poincar\'e polynomials
$$\beta((\pi^2)^{-1}(\tilde E^2_y))-\beta(D^3 \cap (\pi^2)^{-1}(\tilde E^2_y))=\beta(\tilde E^3)-\beta(\tilde D^3).$$
In particular we obtain the following expression for the virtual
Poincar\'e polynomial of $X^1_y$:
$$\beta(X^1_y)=\beta(C^1_y)+\beta(C^2_y)-\beta(C^2_y\cap
E^2_y)+\beta(X^3_y)-\beta(\tilde E^3_y),$$
with $\beta(C^1_a)=\beta(C^1_b)$, $\beta(C^2_a)=\beta(C^2_b)$ and $\beta(C^2_a\cap E^2_a)=\beta(C^2_b\cap E^2_b)$ for
$a,b$ in the same connected component of $Y\setminus Z^2$, where $Z^2=Z^1 \cup Z_1^2 \cup Z^2_2$.

\vskip 3mm
If $s\geq 4$, assume that after $j$ blowings-up, with $j\in \{2,\ldots,s-1\}$, we are in such a situation that 
$$\beta(X^1_y)=\sum_{i=1}^{j} \big (\beta(C^i_y)-\beta(C^i_y\cap
E^i_y)\big )+\beta(X^{j+1}_y)-\beta(\tilde E^{j+1}_y),$$
with $\beta(C^i_a)=\beta(C^i_b)$ and $\beta(C^i_a\cap E^i_a)=\beta(C^i_b\cap E^i_b)$ for $a,b$ in the same connected component of $Y\setminus Z^{j}$ and
$i\in \{1,\ldots,j\}$, where $Z^{j} \subset Y$ is a subset of
codimension at least one in $Y$. 

We proceed similarly to the second blowing-up. As $C^{j+1}\subset M^{j+1}$ is non singular, there exists $Z^{j+1}_1\subset Y$ of codimension at least one in $Y$ such that the
restriction $\sigma^{j+1}_{|C^{j+1}}$ of $\sigma^{j+1}$ to $C^{j+1}$ is Nash trivial
along $Y\setminus Z^{j+1}_1$ by Theorem \ref{CS}. As a consequence $\beta(C^{j+1}_a)=\beta(C^{j+1}_b)$ for
$a,b \in Y\setminus Z^{j+1}_1$ by Proposition \ref{beta-nash}.

This blowing-up gives rise the following commutative diagrams
$$\xymatrix{ & \tilde D^{j+2}_y \ar[r]\ar[d]_{\pi^{j+1}_y} &  X^{j+2}_y \ar[d]^{\pi^{j+1}_y}
 &&     D^{j+2}_y \cap (\pi^{j+1})^{-1}(\tilde E^{j+1}_y)
 \ar[r]\ar[d]_{\pi^{j+1}_y} &    (\pi^{j+1}_y)^{-1}(\tilde E^{j+1}_y)  \ar[d]^{\pi^{j+1}_y}   \\
   &       C^{j+1}_y \ar[r]  & X^{j+1}_y                      &&
          C^{j+1}_y\cap E^{j+1}_y    \ar[r]  &   \tilde E^{j+1}_y   }$$
for $y\in Y$ (note that $C^{j+1}_y\cap E^{j+1}_y= C^{j+1}_y\cap \tilde E^{j+1}_y$
since $C^{j+1}$ is included in $X^{j+1}$). The difference with the case of the second blowing-up is now that $C^{j+1}\cap E^{j+1}$ is not
necessarily non singular since $E^{j+1}$ has normal crossings, so
we can not apply directly Theorem \ref{CS}. However, as $C^{j+1}$ is smooth and has simultaneously only normal
crossings with $E^{j+1}$ by Theorem \ref{hiro}, we may apply Theorem \ref{FKS} with $M^{j+1}$, $C^{j+1}$
and $E^{j+1}$ in order
 to obtain a Nash trivialisation of the intersection
$C^{j+1}\cap E^{j+1}$ outside a codimension at least one subset $Z^{j+1}_2 \subset
Y$ of $Y$. In particular the virtual Poincar\'e polynomials of $C^{j+1}_a\cap E^{j+1}_a$ and $C^{j+1}_b\cap E^{j+1}_b$ coincide for
$a,b$ in the same connected component of $Y\setminus Z^{j+1}_2$ by Proposition \ref{beta-nash}.

Finally, we obtain by additivity of the virtual Poincar\'e polynomial, in a similarly way than in the case of the second blowing-up, that 
$$\beta(X^1_y)=\sum_{i=1}^{j+1} \big (\beta(C^i_y)-\beta(C^i_y\cap
E^i_y)\big )+\beta(X^{j+2}_y)-\beta(\tilde E^{j+2}_y),$$
with $\beta(C^i_a)=\beta(C^i_b)$ and $\beta(C^i_a\cap E^i_a)=\beta(C^i_b\cap E^i_b)$ for $a,b$ in the same connected component of $Y\setminus Z^{j+1}$ and
$i\in \{1,\ldots,j+1\}$, where $Z^{j+1}=Z^{j+1}_1 \cup Z^{j+1}_2 \subset Y$ is a subset of
codimension at least one in $Y$.

By induction, we obtain after the last blowing-up that the virtual Poincar\'e polynomial of $X^1_y$ is equal to:
$$\beta(X^1_y)=\sum_{i=1}^{s-1} \big (\beta(C^i_y)-\beta(C^i_y\cap
E^i_y)\big )+\beta(X^s_y)-\beta(\tilde E^s_y),$$
with $\beta(C^i_a)=\beta(C^i_b)$ and $\beta(C^i_a\cap E^i_a)=\beta(C^i_b\cap E^i_b)$ for $a,b$ in the same connected component of $Y\setminus Z^{s-1}$ and
$i\in \{1,\ldots,s-1\}$, where $Z^{s-1} \subset Y$ is a subset of
codimension at least one in $Y$.

The singularities of the variety $X$ being resolved in $X^s$, we are in position to apply once more Theorem \ref{CS} to
$\sigma ^s_{|X^s}:X^s\to Y$ and Theorem \ref{FKS} to $M^s$, $\tilde
E^s$ and $\sigma_s$ since $\tilde E^s$ has only normal crossings. We
obtain that way a new subset $Z^s\subset Y$ of codimension at least
one in $Y$ such that $\beta(X^s_a)=\beta(X^s_b)$ and $\beta(\tilde
E^s_a)=\beta(\tilde E^s_b)$ for $a,b$ in the same connected component of $Y\setminus Z^s$.

Finally, for $a,b$ in the same connected component of $Y\setminus Z$, where $Z=Z^{s-1}\cup Z^s$, we obtain that $\beta(X^1_a)=\beta(X^1_b)$ and the proof is achieved.
\end{proof}

\enddocument